\title{Some Identities in Quantum Torus Arising from Ringel-Hall Algebras}
\author{Jiuzhao Hua} 
\newtheorem{thm}{Theorem}[section]
\newtheorem{lem}{Lemma}[section]
\newtheorem{cor}{Corollary}[section]
\begin{document}
\date{\vspace{-0.5cm}}\date{} 
\maketitle\begin{abstract}
We define two classes of representations of quivers over arbitrary fields, called monomorphic representations and epimorphic representations. We show that every representation has a unique maximal nilpotent subrepresentation and the associated quotient is always monomorphic, and every representation has a unique maximal epimorphic subrepresentation and the associated quotient is always nilpotent. The uniquenesses of such subrepresenations imply two identities in the Ringel-Hall algebra. By applying Reineke's integration map, we
obtain two identities in the corresponding quantum torus.
\end{abstract}

\section{Introduction}

Let $\mathbb{N}$ be the set of all non-negative integers, $\mathbb{Z}$ the ring of integers and $\mathbb{Q}$ the field of rational numbers. 
Let $Q = (Q_0, Q_1)$ be a finite quiver and assume that $Q$ is connected and
$Q_0 = \{1,2,\cdots,n\}$ with $n\ge 1$. For any arrow $h\in Q_1$, let $h'$ and $h''$ be its \textit{source vertex} and \textit{target vertex} respectively,
depicted as $_{h'}\bullet \overset{h}{\longrightarrow} \bullet_{h''}$. 
The \textit{Euler form} associated with $Q$ is defined as follows:

\begin{equation}
\langle v,w \rangle := \sum_{i\in Q_0}v_iw_i - \sum_{h\in Q_1} v_{h'}w_{h''} \,\,\text{ for } v,w\in \mathbb{Z}^n.
\end{equation}

A \textit{representation} of $Q$ over a field $k$ is a collection of finite dimensional $k$-vector spaces and $k$-linear maps 
$M = (V_i, f_h)_{ i \in Q_0, h\in Q_1}$ such that $f_h$ is a $k$-linear map
from $V_{h'}$ to $V_{h''}$. $(\dim_k V_1, \cdots, \dim_k V_n)\in\mathbb{N}^n$ is called the \textit{dimension vector} of $M$, denoted by $\dim M$.
A \textit{morphism} from $M$ to another representation $N = (W_i, g_h)_{ i \in Q_0, h\in Q_1}$ is a collection of $k$-linear maps $(\phi_i: V_i \to W_i)_{ i \in Q_0}$ such that 
$\phi_{h''} f_h = g_h\phi_{h'}$ for all $h\in Q_1$. If $\phi_i$ is non-singular for all $i\in Q_0$, then $M$ and $N$ are called \textit{isomorphic}. $N$
is called a \textit{subrepresentation} of $M$ if $W_i\subset V_i$ for all $i\in Q_0$ and $g_h(x) = f_h(x)$ for all $h\in Q_1$ and $x\in W_{h'}$. In this case we write $g _h = f_h|_{W_{h'}}$.

A vertex $i\in Q_0$ is called a \textit{source vertex} if it is not a target of any arrow in $Q_1$, and $i$ is called a \textit{sink vertex} if it is not a source of any arrow in $Q_1$.
We assume that $Q$ has no sink vertices and no source vertices. If $Q$ does have sink vertices or source vertices, we will work over an extended quiver $\overline{Q}$, 
which is constructed as follows:
\begin{enumerate}
\item add a new vertex labelled as $n+1$,
\item add a new arrow from vertex $n+1$ to each source vertex,
\item add a new arrow from each sink vertex to vertex $n+1$.
\end{enumerate}
In this setting, every representation of $Q$ can be extended to a representation of $\overline{Q}$ by adding the zero vector space to vertex $n+1$ 
and the zero map to every arrow connected to vertex $n+1$.

Let $N= (W_i, f_h|W_{h'})_{ i \in Q_0, h\in Q_1}$ be a subrepresentation of $M = (V_i, f_h)_{ i \in Q_0, h\in Q_1}$, we define an operator $im^-$ as follows:
$$
im^-(N) := (U_i, f_h|{U_{h'}})_{ i \in Q_0, h\in Q_1} \text{ where } U_i = \bigcap_{h'=i}f^{-1}_h(W_{h''}) \subset V_i \text{ for } i\in Q_0.
$$
Moreover, we let
$$
im^{-1}(N) = im^-(N) \text{ and } im^{-i}(N) = im^-(im^{-(i-1)}(N)) \text{ for } i > 1.
$$
Note that $im^-(N)$ is always a subrepresentation of $M$ and $N \subset im^-(N)$. Thus we have the following chain of subrepresentations of $M$:
\begin{equation}\label{filtration -}
0 \subset im^{-1}(0) \subset  im^{-2}(0)  \subset \cdots \subset  im^{-s}(0)  \subset \cdots
\end{equation}

$M$ is called a \textit{nilpotent representation} if $im^{-s}(M) = M$ for some $s\in\mathbb{N}$ and
$M$ is called a \textit{monomorphic representation} if $im^{-1}(0)=0$. 
Note that $M$ is nilpotent if and only there exists $s\in\mathbb{N}$ such that each vector space $V_i$ (for $i\in Q_0$) is mapped
to the zero vector space by any path starting from vertex $i$ as long as the length of the path is greater $s$. Thus, the
definition of nilpotency here is equivalent to the one given by Bozec, Schiffmann \& Vasserot \cite{B-S-V 2018}. 
Note that if $Q$ has no oriented cycles, then the only nilpotent representation of $Q$ is the zero representation.

We define another operator $im^+$ as follows:
$$
im^+(N) := (U_i, f_h|U_{h'})_{ i \in Q_0, h\in Q_1} \text{ where } U_i = \sum_{h''=i}f_h(W_{h'}) \subset W_i \text{ for } i\in Q_0.
$$
Moreover, we let
$$
im^{+1}(N) = im^+(N) \text{ and } im^{+i}(N) = im^+(im^{+(i-1)}(N)) \text{ for } i > 1.
$$
Note that $im^+(N)$ is always a subrepresentation of $M$ and $im^+(N)\subset N$. Thus we have the following chain of subrepresentations of $M$:
\begin{equation}\label{filtration +}
M \supset im^{+1}(M) \supset im^{+2}(M) \supset \cdots \supset im^{+s}(M) \supset \cdots
\end{equation}

$M$ is called an \textit{epimorphic represenation} if $im^{+1}(M) = M$. Note that $M$ is nilpotent if there exists $s\in\mathbb{N}$ such that
$im^{+s} (M)= 0$.

Let $\mathrm{mod}_{k}(Q)$ be the category of representations of $Q$ over $k$ and let $\mathrm{mod}^n_{k}(Q)$ (resp. $\mathrm{mod}^m_{k}(Q)$, $\mathrm{mod}^e_{k}(Q)$)
be the subcategory consisting of all nilpotent (resp. monomorphic, epimorphic) representations. All three subcategories are closed under 
direct summands and extensions. Let $\underline{\mathrm{mod}}_{k}(Q)$ (resp. $\underline{\mathrm{mod}}^n_{k}(Q)$, $\underline{\mathrm{mod}}^m_{k}(Q)$, $\underline{\mathrm{mod}}^e_{k}(Q)$) denote the set of isomorphism classes in $\mathrm{mod}_{k}(Q)$(resp. $\mathrm{mod}^n_{k}(Q)$, $\mathrm{mod}^m_{k}(Q)$,
$\mathrm{mod}^e_{k}(Q)$). The isomorphism class of an object $M$ in a category is denoted by $[M]$ and the cardinality of a set $X$ is denoted by $|X|$.

For any finite field $k$, Ringel \cite{CR 1990} defines an associative algebra $\mathcal{H}(Q)$ known as the \textit{Ringel-Hall algebra} associated with $Q$. As a $\mathbb{Q}$-vector space, 
$\mathcal{H}(Q)$ has a basis $\{[M] \,|\, [M] \in \underline{\mathrm{mod}}_{k}(Q)\}$,
and the multiplication of two basis elements is given by:
$$
[M]\circ [N]: = \sum_{[X]} g^X_{MN} [X],
$$
where $g^X_{MN} $ is the number of subrepresentations $U$ of $X$ such that $U\cong N$ and $X/U \cong M$.

Ringel \cite{CR 1990} proved that when $Q$ is of finite representation type (precisely when the underlying graph of $Q$ is a Dynkin diagram by Gabriel \cite{PG 1972}) the twisted Ringel-Hall
algebra is isomorphic to the quantized enveloping algebra of the positive part of the semisimple Lie algebra associated with $Q$. 
For general quivers, Green \cite{JG 1995} proved that the composition subalgebra of the twisted
Ringel-Hall algebra is isomorphic to the quantized enveloping algebra of the positive  part of the Kac-Moody algebra associated with $Q$.

Let $X_1,\cdots,X_n$ be $n$ non-commuting indeterminates, and let $X^v = \prod_{i=1}^nX_i^{v_i}$ for any vector $v=(v_1,\cdots,v_n)\in\mathbb{N}^n$.
Let $\mathcal{T}(Q)$ be the \textit{torus algebra} associated with  $Q$. Thus as a  $\mathbb{Q}$-vector space, $\mathcal{T}(Q)$ has a basis $\{X^v |\, v\in \mathbb{N}^n\}$,
and the multiplication of two basis elements is defined by the following rule:
\begin{equation}
X^v \circ X^w = q^{ -\langle v, w \rangle} X^{v+w} \, \text{ for all } v,w\in \mathbb{N}^n.
\end{equation}

Reineke's integration map is a $\mathbb{Q}$-linear map from $\mathcal{H}(Q)$ to $\mathcal{T}(Q)$ defined as follows:
\begin{equation}
\delta : [M] \,\,\to\,\, \frac{1}{|\mathrm{Aut}(M)|} X^{\dim M},
\end{equation}
where $\mathrm{Aut}(M)$ is the automorphism group of $M$.
The Riedtmann-Peng formula (\cite{CRD 1994}\cite{LP 1998}) can be stated as:
$$
g^X_{MN} = \frac{|\mathrm{Aut}(X)|\cdot |\mathrm{Ext}^1(M,N)_X|}
{|\mathrm{Hom}(M,N)|\cdot |\mathrm{Aut}(M)|\cdot |\mathrm{Aut}(N)|},
$$
where $\mathrm{Ext}^1(M,N)_X$ is the set of extension classes corresponding to short exact sequences with middle term isomorphic to $X$. The
Riedtmann-Peng formula implies that Reineke's integration map $\delta$ is a homomorphism of algebras (Lemma 3.3 of Reineke \cite{MR 2006}).
Reineke's integration map can be naturally extended to a homomorphism from the completion algebra of $\mathcal{H}(Q)$ to 
the completion algebra of $\mathcal{T}(Q)$. In these two completion algebras, the sum and product of two infinite series are well defined because
 both algebras are graded by dimension vectors.

Let $\mathbb{Q}(t)$ be the field of rational functions in $t$ over $\mathbb{Q}$ and let $\mathcal{T}_t(Q)$ be the \textit{quantum torus} over $\mathbb{Q}(t)$ associated with $Q$. 
Thus as a $\mathbb{Q}(t)$-vector space, $\mathcal{T}_t(Q)$ has a basis $\{X^v | \,v\in \mathbb{N}^n\}$ and the multiplication of two basis elements is defined by the following rule:
\begin{equation}
X^v \circ X^w = t^{ -\langle v, w \rangle} X^{v+w} \, \text{ for all } v,w\in \mathbb{N}^n.
\end{equation}

In the next section, we show that each representation of $Q$ is a unique extension of a monomorphic representation by a nilpotent representation, and it is also 
a unique extension of a nilpotent representation by an epimorphic representation. The uniquenesses of those extensions yield two identities in the Ringel-Hall algebra
involving the numbers of representations of $Q$ over finite fields. 
These identities are then translated into identities in the quantum torus by Reineke's integration map. 
In the last section we present two identities which count the isomorphism classes of absolutely simple (resp. indecomposable) 
conservative representations of $Q$ over finite fields.

\section{Two Identities in Quantum Torus}

Each representation $M = (V_i, f_h)_{ i \in Q_0, h\in Q_1}$  gives rise to two linear maps $\sigma_i$ and $\tau_i$ for each $i\in Q_0$ as follows:
\begin{align*}
\sigma_i :  V_i & \,\,\to\,\, \bigoplus_{h'=i} V_{h''}, & {} \!\!\!\!\!\!\tau_i: \bigoplus_{h''=i}V_{h'} & \,\,\to\,\, V_i , \\
v_i & \,\,\mapsto\,\, (f_h(v_i))_{h'=i},  & {} (v_{h'}) & \,\,\mapsto\,\, \sum_{h''=i} f_h(v_{h'}). 
\end{align*}

The following lemma is a direct consequence of the definitions of monomorphic and epimorphic representations.
\begin{lem}
Let $M = (V_i, f_h)_{ i \in Q_0, h\in Q_1}$ be a representation of $Q$. Then we have
\begin{enumerate}
\item $M$ is monomorphic if and only $\sigma_i$ is injective for all $i\in Q_0$.
\item $M$ is epimorphic if and only $\tau_i$ is surjective for all $i\in Q_0$,
\end{enumerate}
\end{lem}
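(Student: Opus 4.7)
The plan is to prove both statements by directly unwinding the relevant definitions; no substantial argument is required beyond identifying the kernel of $\sigma_i$ with the $i$-component of $im^-(0)$ and the image of $\tau_i$ with the $i$-component of $im^+(M)$.

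For part (1), I would start from the zero subrepresentation $0 = (W_i = 0)_{i\in Q_0}$ and compute $im^-(0)$ explicitly. By the defining formula, its $i$-th component is $U_i = \bigcap_{h' = i} f_h^{-1}(0) = \bigcap_{h'=i} \ker f_h$. On the other hand, a vector $v_i \in V_i$ lies in $\ker \sigma_i$ exactly when $f_h(v_i) = 0$ for every arrow $h$ with $h' = i$, so $\ker \sigma_i = \bigcap_{h'=i} \ker f_h = U_i$. Therefore $im^{-1}(0) = 0$ (i.e.\ $M$ is monomorphic) if and only if $\ker \sigma_i = 0$ for each $i \in Q_0$, which is the claimed equivalence.

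For part (2), I would take $N = M$ itself and compute $im^+(M)$. Its $i$-th component is $U_i = \sum_{h''=i} f_h(V_{h'})$, which is exactly the image of $\tau_i$, since $\tau_i$ acts by $(v_{h'}) \mapsto \sum_{h''=i} f_h(v_{h'})$. Thus $im^{+1}(M) = M$ (i.e.\ $M$ is epimorphic) if and only if $\mathrm{im}(\tau_i) = V_i$ for every $i \in Q_0$, which is the claimed equivalence.

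There is no real obstacle here; the only thing to be careful about is matching indexing conventions, in particular that the direct sum in the codomain of $\sigma_i$ and the domain of $\tau_i$ runs over arrows incident to vertex $i$ in the correct direction, so that the elementary identifications $\ker \sigma_i = \bigcap_{h'=i} \ker f_h$ and $\mathrm{im}\,\tau_i = \sum_{h''=i} f_h(V_{h'})$ hold on the nose.
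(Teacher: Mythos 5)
Your proof is correct and matches the paper's approach: the paper gives no argument at all, simply asserting the lemma is a direct consequence of the definitions, and your identifications $\ker\sigma_i = \bigcap_{h'=i}\ker f_h = (im^{-1}(0))_i$ and $\mathrm{im}\,\tau_i = \sum_{h''=i}f_h(V_{h'}) = (im^{+1}(M))_i$ are exactly the intended unwinding.
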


\begin{lem}\label{lem monomorphic}
For any representation $M$ of $Q$ over a field $k$, there exists a unique maximal nilpotent subrepresentation $N\subset M$.  Moreover, $M/N$ is monomorphic.
\end{lem}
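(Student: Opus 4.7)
The plan is to take $N$ to be the stable term of the canonical filtration \eqref{filtration -}. Since $\dim_k im^{-s}(0)$ is non-decreasing in $s$ and bounded above by $\dim_k M$, the chain stabilizes at some $s_0 \ge 0$ with $im^{-(s_0+1)}(0) = im^{-s_0}(0)$. Set $N := im^{-s_0}(0)$, which is a subrepresentation of $M$ by the remark following the definition of $im^-$.

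The central bookkeeping lemma I would establish first is the following compatibility of $im^-$ with passage to a subrepresentation: if $N' \subset M$ is any subrepresentation and $W \subset N'$ is a subrepresentation of $N'$, then
\[
im^-_{N'}(W) \;=\; im^-_M(W) \cap N'.
\]
This is immediate from the definition, since for each vertex $i$ the $i$-th component of either side equals $\{v \in N'_i : f_h(v) \in W_{h''} \text{ for all } h \text{ with } h'=i\}$. Iterating gives $im^{-s}_{N'}(0) = im^{-s}_M(0) \cap N'$ for all $s$.

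Applying this with $N' = N$ shows $im^{-s}_N(0) = im^{-s}_M(0) \cap N = im^{-s}_M(0)$ for all $s \le s_0$, and in particular $im^{-s_0}_N(0) = N$, so $N$ is nilpotent. For maximality and uniqueness, let $N' \subset M$ be any nilpotent subrepresentation; say $im^{-t}_{N'}(0) = N'$. Then the compatibility identity gives $N' = im^{-t}_M(0) \cap N' \subset im^{-t}_M(0) \subset N$, so every nilpotent subrepresentation is contained in $N$. This forces $N$ to be the unique maximal nilpotent subrepresentation of $M$.

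It remains to verify that $M/N$ is monomorphic. Writing $\bar f_h : V_{h'}/N_{h'} \to V_{h''}/N_{h''}$ for the induced maps, the $i$-th component of $im^{-1}_{M/N}(0)$ is $\bigcap_{h'=i} \ker \bar f_h = \bigl(\bigcap_{h'=i} f_h^{-1}(N_{h''})\bigr)/N_i$, where we use that $N_i \subset f_h^{-1}(N_{h''})$ for every $h$ with $h'=i$ (because $N$ is a subrepresentation). Hence $im^{-1}_{M/N}(0) = im^-_M(N)/N$. But by the very choice of $s_0$ we have $im^-_M(N) = im^{-(s_0+1)}_M(0) = im^{-s_0}_M(0) = N$, so $im^{-1}_{M/N}(0) = 0$ as required. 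The main (and only) obstacle is the two interaction identities $im^-_{N'}(W) = im^-_M(W) \cap N'$ and $im^{-1}_{M/N}(0) = im^-_M(N)/N$; both are formal unwindings, but everything in the proof is funneled through them.
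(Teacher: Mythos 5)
Your proof is correct and follows essentially the same route as the paper: both take $N$ to be the stable term $im^{-s_0}(0)$ of the chain \eqref{filtration -}, prove maximality by showing every nilpotent subrepresentation lands in some $im^{-t}(0)$, and deduce monomorphy of $M/N$ from $im^{-}(N)=N$. The only divergence is cosmetic --- you verify containment of a nilpotent subrepresentation via the compatibility identity $im^-_{N'}(W)=im^-_M(W)\cap N'$ rather than via the path-length characterization of nilpotency, and you spell out the computation $im^{-1}_{M/N}(0)=im^-_M(N)/N=0$ that the paper dismisses as ``an easy consequence of the definition of $im^-$.''
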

\begin{proof}
Since $M$ is finite dimensional, chain (\ref {filtration -}) must be stable in finite steps, i.e., there exists an integer $s$ such that $im^{-t}(0) = im^{-s}(0)$ for all $t>s$. 
We claim that $im^{-s}(0)$ is the unique maximal nilpotent subrepresentation of $M$. 

$im^{-s}(0)$ is obviously nilpotent. Let $N=(W_i, f_h|_{W_{h'}})_{ i \in Q_0, h\in Q_1}$ be any nilpotent subrepresentation of $M$, we claim that $N\subset  im^{-s}(0)$. Since $N$ is nilpotent, there exists $r\in\mathbb{N}$ 
such that each vector space $W_i$ (for $i\in Q_0$) is mapped to the zero vector space by any path starting from vertex $i$ as long as the length of the path is greater than $r$. This
implies that $N\subset im^{-(s+r)}(0)$. Since $im^{-(s+t)}(0) = im^{-s}(0)$, we have $N\subset im^{-s}(0)$.

To show that $M/im^{-s}(0)$ is monomorphic, we only need to show the $\sigma_i$ is injective for each $i\in Q_0$ when $\sigma_i $ is acting on vector spaces in $M/im^{-s}(0)$, which is an easy consequence of the definition of $im^{-}$.

\end{proof}

\begin{lem}\label{lem epimorphic}
For any representation $M$ of $Q$ over a field $k$, there exists a unique maximal epimorphic subrepresentation $E\subset M$.  Moreover, $M/E$ is nilpotent.
\end{lem}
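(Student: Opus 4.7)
The plan is to mirror the proof of Lemma \ref{lem monomorphic}, using the descending chain (\ref{filtration +}) in place of the ascending chain (\ref{filtration -}). Since $M$ is finite dimensional, the chain $M \supset im^{+1}(M) \supset im^{+2}(M) \supset \cdots$ must stabilize, so there exists $s \in \mathbb{N}$ such that $im^{+t}(M) = im^{+s}(M)$ for all $t \geq s$. I would set $E := im^{+s}(M)$ and claim that this is the unique maximal epimorphic subrepresentation of $M$.

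To show $E$ is epimorphic, observe that $im^+(E) = im^{+(s+1)}(M) = im^{+s}(M) = E$; unwinding the definition of $im^+$ and of the maps $\tau_i$ from the previous lemma, this is exactly the statement that $\tau_i$ is surjective on each vector space of $E$. For maximality, let $E'$ be any epimorphic subrepresentation of $M$. Since $E'$ is epimorphic, $im^+(E') = E'$ (the formula $U_i = \sum_{h''=i} f_h(W_{h'}^{E'})$ depends only on the vector spaces of $E'$ and on the restricted maps, so computing $im^+$ inside $E'$ or inside the ambient $M$ gives the same result). The operator $im^+$ is clearly monotone: if $N_1 \subset N_2$ then $\sum_{h''=i} f_h(W_{h'}^{N_1}) \subset \sum_{h''=i} f_h(W_{h'}^{N_2})$. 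Applying this to $E' \subset M$ and iterating gives $E' = im^{+j}(E') \subset im^{+j}(M)$ for every $j \geq 0$; taking $j = s$ yields $E' \subset E$.

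It remains to show that $M/E$ is nilpotent, for which it suffices (by the remark at the end of Section 1) to verify that $im^{+s}(M/E) = 0$. I would check that $im^+$ commutes with passage to the quotient by $E$: the $i$-th component of $im^+(M/E)$ is $\bigl(im^+(M)_i + E_i\bigr)/E_i$, and since $im^+(M) \supset im^{+2}(M) \supset \cdots \supset im^{+s}(M) = E$ forces $E_i \subset im^+(M)_i$, this reduces to $im^+(M)_i/E_i$. A straightforward induction then gives $im^{+j}(M/E) = im^{+j}(M)/E$, and in particular $im^{+s}(M/E) = E/E = 0$.

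The main obstacle is really just careful bookkeeping: one has to verify, once and for all, that $im^+$ applied to a subrepresentation is well-defined intrinsically (independent of the ambient $M$), is monotone in its argument, and commutes with quotients by subrepresentations that it dominates. Each of these is immediate from the set-theoretic definition of $im^+$, but stating them cleanly is what makes the two-sided maximality argument and the nilpotency of $M/E$ go through without circularity.
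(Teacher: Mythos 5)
Your proposal is correct and follows essentially the same route as the paper: stabilize the chain (\ref{filtration +}) at $E = im^{+s}(M)$, use $im^{+}(E')=E'$ together with monotonicity of $im^{+}$ to get maximality, and conclude nilpotency of $M/E$. The only cosmetic difference is in the last step, where the paper observes that each layer $im^{+i}(M)/im^{+(i+1)}(M)$ is nilpotent and invokes closure under extensions, while you compute directly that $im^{+s}(M/E)=0$; both rest on the same remark from Section 1.
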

\begin{proof}
Since $M$ is finite dimensional, chain (\ref {filtration +}) must be stable in finite steps, i.e., there exists an integer $s$ such that $im^{+t}(M) = im^{+s}(M)$ for all $t>s$. 
We claim that $im^{+s}(M)$ is the unique maximal epimorphic subrepresentation of $M$. 

$im^{+s}(M)$ is obviously epimorphic. Let $E$ be any epimorphic subrepresentation of $M$, then $im^{+1}(E) =E$ and hence $im^{+s}(E) = E$. $E \subset M$ implies that 
$im^{+s}(E) \subset im^{+s}(M)$ and hence $E\subset im^{+s}(M)$. Thus $im^{+s}(M)$ is the unique maximal epimorphic subrepresentation. 
$M/im^{+s}(M)$ is nilpotent because $im^{+i}(M)/im^{+(i+1)}(M)$ is nilpotent for all $i\ge 1$.
\end{proof}

In what follows, let $k$ be the finite field $\mathbb{F}_q$ with $q$ elements, where $q$ is a prime power. Lemma \ref{lem monomorphic} and Lemma \ref{lem epimorphic} implies the following theorem.
\begin{thm}\label{prod_in_hall_alg}
The following identities hold in the completion algebra of the Ringel-Hall algebra $\mathcal{H}(Q)$:
\begin{align*}
\sum_{[M]\in\underline{\mathrm{mod}}_{k}(Q)}[M] &= \Bigg( \sum_{[M]\in\underline{\mathrm{mod}}_{k}^m(Q)}[M] \Bigg)\circ 
\Bigg( \sum_{[M]\in\underline{\mathrm{mod}}_{k}^n(Q)}[M] \Bigg),\\
\sum_{[M]\in\underline{\mathrm{mod}}_{k}(Q)}[M] &= \Bigg( \sum_{[M]\in\underline{\mathrm{mod}}_{k}^n(Q)}[M] \Bigg)\circ 
\Bigg( \sum_{[M]\in\underline{\mathrm{mod}}_{k}^e(Q)}[M] \Bigg).
\end{align*}
\end{thm}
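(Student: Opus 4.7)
The plan is to expand both sides of each identity in the basis $\{[X]\}$ of the completion of $\mathcal{H}(Q)$ and compare coefficients. By the definition of the Ringel-Hall product, the coefficient of $[X]$ on the right-hand side of the first identity equals
\[
\sum_{[A]\in \underline{\mathrm{mod}}^m_k(Q),\ [B]\in \underline{\mathrm{mod}}^n_k(Q)} g^X_{AB},
\]
which by the definition of $g^X_{AB}$ is the number of subrepresentations $U\subset X$ such that $U$ is nilpotent and $X/U$ is monomorphic. Since the coefficient of $[X]$ on the left is $1$, the identity reduces to proving that every $X$ admits exactly one such $U$.

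Existence is immediate from Lemma \ref{lem monomorphic}: the maximal nilpotent subrepresentation $N$ of $X$ satisfies both conditions. The main obstacle is uniqueness. Given any nilpotent $U\subset X$ with $X/U$ monomorphic, the maximality clause of Lemma \ref{lem monomorphic} forces $U\subset N$, so $N/U$ sits inside $X/U$. I would then establish the key auxiliary fact that a monomorphic representation contains no nonzero nilpotent subrepresentation: for a subrepresentation $T$ of a monomorphic $Y$, one checks that $\ker\sigma_i^T = T_i\cap \ker\sigma_i^Y = 0$, so $T$ is itself monomorphic, and then induction on $s$ gives $im^{-s}(0)=0$ inside $T$. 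Applied to the nilpotent subrepresentation $N/U$ of the monomorphic $X/U$, this forces $N/U=0$, i.e., $U=N$.

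The second identity is handled by a dual argument. The coefficient of $[X]$ on the right counts subrepresentations $U\subset X$ with $U$ epimorphic and $X/U$ nilpotent. Existence is given by Lemma \ref{lem epimorphic} via the maximal epimorphic subrepresentation $E$. For uniqueness, any such $U$ is contained in $E$, so $E/U$ sits as a subrepresentation of the nilpotent $X/U$; using the $im^+$-characterization of nilpotency noted in the excerpt, $im^{+s}(E/U)\subset im^{+s}(X/U)=0$ for large $s$, so $E/U$ is nilpotent. On the other hand $E/U$ is a quotient of the epimorphic $E$, and the surjectivity of $\tau_i$ on $E$ descends verbatim to $E/U$, so $E/U$ is epimorphic. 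A representation that is simultaneously epimorphic and nilpotent must vanish, since $im^{+s}$ applied to it is both itself and zero. Hence $U=E$.

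Finally, I would remark that all these infinite sums and their products are well defined in the completion algebra because $\mathcal{H}(Q)$ is graded by dimension vector and each graded component is finite-dimensional over $k=\mathbb{F}_q$, so when extracting the coefficient of a fixed $[X]$ only finitely many pairs $([A],[B])$ contribute.
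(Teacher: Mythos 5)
Your proof is correct and follows the paper's intended route: the paper offers no written proof beyond asserting that Lemmas \ref{lem monomorphic} and \ref{lem epimorphic} imply the theorem, and your argument is exactly the natural completion of that claim. In fact you supply a detail the paper leaves implicit --- the lemmas only give existence of a nilpotent subrepresentation with monomorphic quotient (resp.\ epimorphic subrepresentation with nilpotent quotient), and your auxiliary observations (a subrepresentation of a monomorphic representation is monomorphic, hence has no nonzero nilpotent subrepresentation; a quotient of an epimorphic representation is epimorphic, hence cannot be a nonzero nilpotent) are precisely what is needed to get the uniqueness that makes each coefficient equal to $1$.
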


Applying Reineke's integration map $\delta$ on both sides of the identities in Theorem \ref{prod_in_hall_alg}, it yields the following results.

\begin{cor}\label{prod_in_torus}
The following identities hold in the completion algebra of the torus algebra $\mathcal{T}(Q)$:
\begin{align*}
\sum_{v\in\mathbb{N}^n} \frac{|\mathrm{R}(v,\mathbb{F}_q)|}{|\mathrm{GL}(v, \mathbb{F}_q)|} X^v &= \left(\sum_{v\in\mathbb{N}^n} \frac{|\mathrm{M}(v,\mathbb{F}_q)|}{|\mathrm{GL}(v, \mathbb{F}_q)|} X^v\right) \circ
 \left(\sum_{v\in\mathbb{N}^n} \frac{|\mathrm{N}(v,\mathbb{F}_q)|}{|\mathrm{GL}(v, \mathbb{F}_q)|} X^v\right),\\
\sum_{v\in\mathbb{N}^n} \frac{|\mathrm{R}(v,\mathbb{F}_q)|}{|\mathrm{GL}(v, \mathbb{F}_q)|} X^v &= \left(\sum_{v\in\mathbb{N}^n} \frac{|\mathrm{N}(v,\mathbb{F}_q)|}{|\mathrm{GL}(v, \mathbb{F}_q)|} X^v\right) \circ
 \left(\sum_{v\in\mathbb{N}^n} \frac{|\mathrm{E}(v,\mathbb{F}_q)|}{|\mathrm{GL}(v, \mathbb{F}_q)|} X^v\right).
\end{align*}
\end{cor}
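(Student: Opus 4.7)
The plan is to apply the extended integration map $\delta$ termwise to each infinite sum in Theorem \ref{prod_in_hall_alg} and then identify the resulting torus-algebra coefficients with the ratios $|\mathrm{R}(v,\mathbb{F}_q)|/|\mathrm{GL}(v,\mathbb{F}_q)|$ (and the analogues for $\mathrm{M}$, $\mathrm{N}$, $\mathrm{E}$) by a standard orbit-stabiliser argument on the representation variety.

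First I would invoke the fact, already recorded in the introduction, that $\delta$ extends to a $\mathbb{Q}$-algebra homomorphism between the dimension-vector-graded completions of $\mathcal{H}(Q)$ and $\mathcal{T}(Q)$. Because each summand in the identities of Theorem \ref{prod_in_hall_alg} has a well-defined dimension vector and because for every fixed $v\in\mathbb{N}^n$ there are only finitely many isomorphism classes $[M]$ with $\dim M=v$, the sums on both sides live in the completion, and $\delta$ may be applied termwise on both sides of each identity. Since $\delta$ is multiplicative, applying it to a product of sums yields the product of the images of the sums.

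Next I would collect terms by dimension vector. Using $\delta([M])=\frac{1}{|\mathrm{Aut}(M)|}X^{\dim M}$, each sum transforms as
\begin{equation*}
\sum_{[M]\in \underline{\mathrm{mod}}^{?}_{k}(Q)}[M] \,\,\longmapsto\,\, \sum_{v\in\mathbb{N}^n}\Bigg(\sum_{[M],\,\dim M=v}\frac{1}{|\mathrm{Aut}(M)|}\Bigg)X^v,
\end{equation*}
where the inner sum is over all $[M]$ in the appropriate subcategory $(\mathrm{mod},\mathrm{mod}^m,\mathrm{mod}^n,\mathrm{mod}^e)$ with dimension vector $v$. The identification with the stated coefficients then follows from the classical orbit-stabiliser count on the affine variety $\mathrm{R}(v,\mathbb{F}_q)$ of representations with fixed underlying vector spaces $\mathbb{F}_q^{v_i}$: the group $\mathrm{GL}(v,\mathbb{F}_q):=\prod_i\mathrm{GL}_{v_i}(\mathbb{F}_q)$ acts by simultaneous base change, orbits correspond bijectively to isomorphism classes of dimension vector $v$, and the stabiliser of a point representing $M$ is $\mathrm{Aut}(M)$. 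Hence
\begin{equation*}
\sum_{[M],\,\dim M=v}\frac{1}{|\mathrm{Aut}(M)|}=\frac{|\mathrm{R}(v,\mathbb{F}_q)|}{|\mathrm{GL}(v,\mathbb{F}_q)|},
\end{equation*}
and the analogous formulas hold with $\mathrm{R}$ replaced by $\mathrm{M}$, $\mathrm{N}$, or $\mathrm{E}$ provided these symbols denote, respectively, the subvarieties of monomorphic, nilpotent, and epimorphic representations with fixed underlying spaces of dimensions $v_i$ (each of these is a $\mathrm{GL}(v,\mathbb{F}_q)$-stable subset, since the three properties are isomorphism invariants).

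There is essentially no obstacle beyond bookkeeping: multiplicativity of $\delta$ on the completion is already granted, the partition by dimension vector makes all sums convergent coefficient-by-coefficient in $\mathcal{T}(Q)$, and the orbit-stabiliser step is routine. The one point that deserves a line of justification is the stability of the three loci $\mathrm{M}(v,\mathbb{F}_q)$, $\mathrm{N}(v,\mathbb{F}_q)$, $\mathrm{E}(v,\mathbb{F}_q)$ under the $\mathrm{GL}(v,\mathbb{F}_q)$-action, which is immediate from the fact that monomorphy, nilpotency and epimorphy are defined in terms of images, intersections, and sums that are transported by base change. Applying the termwise map to each identity of Theorem \ref{prod_in_hall_alg} and comparing coefficients of $X^v$ then gives the two identities of the corollary.
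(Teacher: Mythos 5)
Your proposal is correct and follows exactly the route the paper intends: the paper's entire "proof" of this corollary is the single remark that one applies Reineke's integration map $\delta$ to both sides of Theorem \ref{prod_in_hall_alg}, with the identification of coefficients via the orbit--stabiliser count $\sum_{[M],\,\dim M = v} 1/|\mathrm{Aut}(M)| = |\mathrm{R}(v,\mathbb{F}_q)|/|\mathrm{GL}(v,\mathbb{F}_q)|$ left implicit. You have simply spelled out the bookkeeping (termwise application on the graded completion, multiplicativity, and $\mathrm{GL}(v,\mathbb{F}_q)$-stability of the three loci) that the paper takes for granted.
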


If $Q$ has source vertices or sink vertices, then Corollary \ref{prod_in_torus} holds for $\overline Q$. By taking $X^{n+1} = 0$ we recover those two identities for $Q$.

For any dimension vector $v=(v_1,\cdots,v_n)\in\mathbb{N}^n$, let
$v\cdot v = \sum_{i=1}^n v_i^2\in\mathbb{N}$, 
$v_\bullet = (v_\bullet^i)\in\mathbb{N}^n$ with $v_\bullet^i = \sum_{h'=i}v_{h''}$, 
$_\bullet v= (_\bullet v^i)\in\mathbb{N}^n$ with $_\bullet v^i = \sum_{h''=i}v_{h'}$, and 
$\mathrm{GL}(v,\mathbb{F}_q) = \prod_{i=1}^n\mathrm{GL}(v_i,\mathbb{F}_q)$, where $\mathrm{GL}(v_i,\mathbb{F}_q)$ is the General Linear Group
of order $v_i$ over $\mathbb{F}_q$. Let $\mathit{gl}(s,t) = \prod_{i=0}^{s-1}(t^s-t^i)\in\mathbb{Q}(t)$ for $s\in\mathbb{N}$ and $\mathit{gl}(v,t) = \prod_{i=1}^n gl(v_i,t)$, then we have 
$|\mathrm{GL}(v,\mathbb{F}_q)| = \mathit{gl}(v,q)$.

For any given dimension vector $v\in\mathbb{N}^n$, let $\mathrm{R}(v,\mathbb{F}_q)$ (resp. $\mathrm{N}(v,\mathbb{F}_q)$, $\mathrm{M}(v,\mathbb{F}_q)$, $\mathrm{E}(v,\mathbb{F}_q)$) 
denote the set of all representations (resp. nilpotent, monomorphic and epimorphic representations)  of $Q$ over $\mathbb{F}_q$ with dimension vector $v$.

Define the following rational functions in $\mathbb{Q}(t)$:
\begin{align*}
&\mathit{r}(v,t) = t^{v\cdot v -\langle v, v \rangle}, \\
&\mathit{m}(v,t) = t^{\langle v, v \rangle}  \frac{\mathit{gl}(v_{\bullet}, t)}{\mathit{gl}(v_{\bullet}-v, t)} \,\,\text{ if } v_{\bullet}\ge v \text{, else } \mathit{m}(v,t) = 0, \\
&\mathit{e}(v,t) = t^{\langle v, v \rangle}  \frac{\mathit{gl}(_{\bullet}v, t)}{\mathit{gl}(_{\bullet}v-v, t)} \,\, \text{ if } _{\bullet}v\ge v \text{, else } \mathit{e}(v,t) = 0.
\end{align*}
Then we have $|\mathrm{R}(v,\mathbb{F}_q)| = \mathit{r}(v,q)$, $|\mathrm{M}(v,\mathbb{F}_q)| = \mathit{m}(v,q)$ and $|\mathrm{E}(v,\mathbb{F}_q)| = \mathit{e}(v,q)$.

Define the rational function $\mathit{n}(v,t)$ as follows:
\begin{equation}\label{num_of_N}
\frac{\mathit{n}(v,t)} {\mathit{gl}(v,t)} = \sum_{v^*}t^{-\sum_{k<l}\langle v^{(k)}, v^{(l)} \rangle} \prod_{k\ge 1} 
\frac{H(v^{(k)}, v^{(k+1)}, t^{-1})} {\mathit{gl}(v^{(k)},t)}, 
\end{equation}
where the sum ranges over all tuples $v^* = (v^{(1)}, v^{(2)}, \dots)$ of non-zero dimension vectors such that $\sum_{k\ge 1} v^{(k)}= v$
and $_{\bullet}v^{(k)} \ge v^{(k+1)}$ for $k\ge 1$, and where the function $H$ is defined by:
\begin{equation}\label{num_of_H}
H(v,w,t^{-1}) = t^{(_{\bullet}v-w)\cdot (_{\bullet}v-w) - _{\bullet}v\cdot _{\bullet}v} \frac{\mathit{gl}(_{\bullet}v, t)}
{\mathit{gl}(_{\bullet}v - w, t)}.
\end{equation}

Then we  have $|\mathrm{N}(v,\mathbb{F}_q)| = \mathit{n}(v,q)$, which is due to Bozec, Schiffmann \& Vasserot \cite{B-S-V 2018}.

For the Jordan quiver i.e., the quiver with one vertex and one loop, the identities in Corollary \ref{prod_in_torus} are reduced to the following identity which is trivial:
\begin{equation}\label{prod_for_Jordan_quiver}
\sum_{n=0}^\infty \frac{q^{n^2} X^n}{\prod_{i=0}^{n-1}(q^n-q^i)} = \left(\sum_{n=0}^\infty \frac{q^{n^2-n}X^n}{\prod_{i=0}^{n-1}(q^n-q^i)}\right) \cdot
 \left(\sum_{n=0}^\infty X^n\right).
\end{equation}

For the wild quiver which has one vertex and 2 loops, the first few terms of the identities in Corollary \ref{prod_in_torus} are as follows:
\begin{align*}
&R(1,q) = q^2, \\
&R(2,q) = q^8, \\
&R(3,q) = q^{18}, \\
&N(1,q) = 1, \\
&N(2,q) = q^3+q^2-q, \\
&N(3,q) = q^9+2q^8-q^6-2q^5+q^3, \\
&M(1,q) = q^2-1, \\
&M(2,q) = q^8-q^5-q^4+q, \\
&M(3,q) = q^{18}-q^{14}-q^{13}-q^{12}+q^9+q^8+q^7-q^3, \\
&E(1,q) = q^2-1, \\
&E(2,q) = q^8-q^5-q^4+q, \\
&E(3,q) = q^{18}-q^{14}-q^{13}-q^{12}+q^9+q^8+q^7-q^3. \\
\end{align*}
Because $|\mathrm{R}(v,\mathbb{F}_q)|$, $|\mathrm{N}(v,\mathbb{F}_q)|$, $|\mathrm{E}(v,\mathbb{F}_q)|$, $|\mathrm{M}(v,\mathbb{F}_q)|$ 
and $|\mathrm{GL}(v,\mathbb{F}_q)|$
are all polynomial functions in $q$ with rational coefficients and there are infinitely many prime numbers, the two identities in Corollary \ref{prod_in_torus} must also hold in the quantum torus.

\begin{cor} The following identities hold in the completion algebra of the quantum torus $\mathcal{T}_t(Q)$:
\begin{align*}
\sum_{v\in\mathbb{N}^n} \frac{r(v, t)}{{gl}(v, t)} X^v &= \left(\sum_{v\in\mathbb{N}^n} \frac{m(v, t)}{{gl}(v, t)} X^v\right) \circ
 \left(\sum_{v\in\mathbb{N}^n} \frac{n(v, t)}{{gl}(v, t)} X^v\right),\\
\sum_{v\in\mathbb{N}^n} \frac{r(v, t)}{{gl}(v, t)} X^v &= \left(\sum_{v\in\mathbb{N}^n} \frac{n(v, t)}{{gl}(v, t)} X^v\right) \circ
 \left(\sum_{v\in\mathbb{N}^n} \frac{e(v, t)}{{gl}(v, t)} X^v\right).
\end{align*}
\end{cor}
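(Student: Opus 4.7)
The plan is to lift the torus-algebra identities of Corollary~\ref{prod_in_torus} to the quantum torus $\mathcal{T}_t(Q)$ by a coefficient-wise specialization argument, using the fact that each $X^w$-component of either side will be a rational function in $t$ over $\mathbb{Q}$ and that these rational functions will agree on the infinite set of prime powers.

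First I would fix an arbitrary $w\in\mathbb{N}^n$ and extract the coefficient of $X^w$ on each side of, say, the first identity. On the left the coefficient is $\mathit{r}(w,t)/\mathit{gl}(w,t)\in\mathbb{Q}(t)$. On the right, using the multiplication rule $X^u\circ X^v = t^{-\langle u,v\rangle}X^{u+v}$, it is the finite sum
$$
\sum_{u+v=w} t^{-\langle u,v\rangle}\,\frac{\mathit{m}(u,t)}{\mathit{gl}(u,t)}\cdot\frac{\mathit{n}(v,t)}{\mathit{gl}(v,t)}.
$$
This is a bona fide element of $\mathbb{Q}(t)$: the denominators $\mathit{gl}(u,t)$ are nonzero polynomials in $t$, and the summation defining $\mathit{n}(v,t)$ in (\ref{num_of_N}), though indexed by tuples $(v^{(1)},v^{(2)},\dots)$, is actually finite because the $v^{(k)}$ are required to be nonzero dimension vectors summing to $v$, hence there are at most $\sum_i v_i$ of them. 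Moreover, only finitely many pairs $(u,v)$ with $u+v=w$ contribute, so passing to the completion causes no issue here.

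Second, I would invoke Corollary~\ref{prod_in_torus}: specializing $t\mapsto q$ for any prime power $q$ converts the multiplication of $\mathcal{T}_t(Q)$ into that of $\mathcal{T}(Q)$, and the polynomiality identities $|\mathrm{R}(w,\mathbb{F}_q)|=\mathit{r}(w,q)$, $|\mathrm{M}(u,\mathbb{F}_q)|=\mathit{m}(u,q)$, $|\mathrm{E}(u,\mathbb{F}_q)|=\mathit{e}(u,q)$, $|\mathrm{N}(v,\mathbb{F}_q)|=\mathit{n}(v,q)$, and $|\mathrm{GL}(u,\mathbb{F}_q)|=\mathit{gl}(u,q)$ all hold. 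Hence the two rational functions attached to the coefficient of $X^w$ agree at every prime power $q$, and so they agree as elements of $\mathbb{Q}(t)$ by the identity principle for rational functions (any two that coincide on an infinite set must be equal). Since $w\in\mathbb{N}^n$ was arbitrary, the two formal series coincide in the completion of $\mathcal{T}_t(Q)$. The second identity follows by the same argument with $\mathit{m}$ replaced by $\mathit{n}$ and $\mathit{n}$ replaced by $\mathit{e}$.

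The main (and rather mild) obstacle is the bookkeeping in the first step: confirming that $\mathit{n}(v,t)$ genuinely lives in $\mathbb{Q}(t)$ rather than merely in some completion of it. Once the outer sum in (\ref{num_of_N}) is seen to be finite and the denominators $\mathit{gl}(\cdot,t)$ are seen to be nonzero polynomials in $t$, the remainder is precisely the density argument foreshadowed in the paragraph preceding the corollary.
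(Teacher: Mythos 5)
Your argument is correct and is essentially the paper's own: the paper justifies this corollary in the single paragraph preceding it, noting that all the relevant counting functions are polynomial in $q$ and that identities of rational functions holding at infinitely many prime powers must hold identically in $t$. Your coefficient-of-$X^w$ extraction and the remark that the sum defining $\mathit{n}(v,t)$ is finite merely make the same density argument explicit.
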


\section{Numbers of absolutely indecomposable conservative representations}
A representation of $Q$ is called \textit{conservative} if it is monomorphic and epimorphic at the same time. The subcategory of conservative representations is denoted by
$\mathrm{mod}_k^c(Q)$. It is closed under direct summands and extensions. A representation of $Q$ over a field is called \textit{absolutely indecomposable} if it is still indecomposable 
when the ground field is extended to its algebraic closure. 
Let $c(v,q)$ be the number of conservative representations of $Q$ over $\mathbb{F}_q$ with 
dimension $v\in\mathbb{N}^n$ and let $a(v,q)$ (resp. $s(v,q)$) 
be the number of isomorphism classes of absolutely indecomposable conservative (resp. absolutely simple conservative) representations of $Q$ over $\mathbb{F}_q$ with dimension $v$.

Following the methods of Mozgovoy and Reineke \cite{MR 2006}\cite{MR 2009}, we have the following result.

\begin{cor}\label{simple_consev}
Assuming that $c(v,q)$ is a polynomial function in $q$ with rational coefficients for all $v\in\mathbb{N}^n$, we have the following
identity in the completion algebra of the quantum torus $\mathcal{T}_q(Q)$:
\begin{equation}
\Bigg(\sum_{v\in\mathbb{N}^n} \frac { c(v,q) } {\mathit{gl}(v,q) } X^v\Bigg) \!\circ 
\mathrm{Exp}  \Bigg(\sum_{v\in\mathbb{N}^n\backslash\{0\}} \!\frac{s(v,q)}{1-q} X^v\Bigg) = 1,\\
\end{equation}
where $\mathrm{Exp}$ is the plethystic exponential map (see \cite{MR 2009} for details).
\end{cor}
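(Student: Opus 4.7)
The plan is to adapt the plethystic argument of Mozgovoy--Reineke (\cite{MR 2006}, \cite{MR 2009}) to the subcategory $\mathrm{mod}_k^c(Q)$ of conservative representations. Since $\tfrac{1}{1-q} = -\tfrac{1}{q-1}$, the identity to prove is equivalent to
\[
\sum_{v\in\mathbb{N}^n}\frac{c(v,q)}{\mathit{gl}(v,q)}X^v \;=\; \mathrm{Exp}\Bigg(\sum_{v\in\mathbb{N}^n\setminus\{0\}}\frac{s(v,q)}{q-1}X^v\Bigg),
\]
the conservative analogue of the usual Mozgovoy--Reineke formula. The needed inputs are in place: $\mathrm{mod}_k^c(Q)$ is closed under direct summands and extensions (giving Krull--Schmidt and a well-defined restricted Hall algebra), and polynomiality of $c(v,q)$ makes the formal identity in $\mathcal{T}_q(Q)$ meaningful.

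The strategy has three stages. First, Krull--Schmidt inside $\mathrm{mod}_k^c(Q)$ expresses the Hall-algebra sum $\sum_{[M]\in\underline{\mathrm{mod}}_k^c(Q)}[M]$ as a product over isomorphism classes of indecomposable conservative representations over $\mathbb{F}_q$. Second, Reineke's integration map $\delta$, which is a homomorphism of completed algebras by Lemma~3.3 of \cite{MR 2006}, translates this product into an identity in $\mathcal{T}(Q)$ that yields $\sum_v c(v,q)/\mathit{gl}(v,q)\,X^v$ on the left and a product of series indexed by indecomposable conservatives on the right. Third, Galois descent rewrites each $\mathbb{F}_q$-indecomposable as a $\mathrm{Frob}$-orbit among the absolutely irreducible conservative building blocks over $\overline{\mathbb{F}_q}$, and the Adams operators $\psi_r$ inside $\mathrm{Exp}$ collapse the sum over orbit lengths into a single plethystic exponential; the factor $1/(q-1)$ arises as $|\mathbb{F}_q^{\times}|^{-1}$, the inverse size of the endomorphism field of an absolutely irreducible object. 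Polynomiality of $c(v,q)$ then lifts the identity from $\mathcal{T}(Q)$ at each prime power $q$ up to $\mathcal{T}_q(Q)$, mirroring the specialization-to-formal-variable argument used after Corollary~\ref{prod_in_torus}.

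The main obstacle is justifying absolutely \emph{simple} rather than absolutely \emph{indecomposable} conservatives in the exponent, which is a genuine structural feature of $\mathrm{mod}_k^c(Q)$ and not merely a notational adjustment of the Mozgovoy--Reineke template. As a sanity check, for the Jordan quiver one has $c(n,q) = |\mathrm{GL}_n(\mathbb{F}_q)|$ and hence $Z_c(X) = 1/(1-X)$, while $s(1,q) = q-1$ and $s(v,q) = 0$ for $|v|>1$, so the right side is $\mathrm{Exp}(X) = 1/(1-X)$ as required; substituting instead the absolutely indecomposable conservative count (which is $q-1$ in every positive dimension) would give $\mathrm{Exp}(X/(1-X)) = \prod_n 1/(1-X^n)$, which is incorrect. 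Resolving this will require a M\"obius-type cancellation inside the plethystic logarithm showing that absolutely indecomposable but non-simple conservatives contribute only through their simple conservative subquotients; the filtrations $im^+$ and $im^-$ of Section~2 provide the natural tool, since each such indecomposable is built from absolutely simple conservatives along those filtrations, and the Adams operators $\psi_r$ should repackage the redundant contributions in precisely the right way. Once this cancellation is in place, the remaining manipulations reduce to the plethystic calculations already carried out in \cite{MR 2006, MR 2009}.
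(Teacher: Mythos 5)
The paper offers no written argument for this corollary beyond the citation ``following the methods of Mozgovoy and Reineke,'' so your proposal must be judged as an instantiation of those methods --- and as such it has a fatal gap that you yourself half-identify. Your Stages 1--3 (Krull--Schmidt decomposition into indecomposables, integration, Galois descent on Frobenius orbits of indecomposables) are the proof architecture of the \emph{other} result in this section, Corollary~\ref{abs_indcomp}: that is the Hua-type identity, it lives in the commutative ring $\mathbb{Q}(q)[[X_1,\dots,X_n]]$, its exponent counts absolutely \emph{indecomposables}, and its left-hand side is correspondingly not $\sum_v c(v,q)/\mathit{gl}(v,q)\,X^v$ but a more complicated sum over tuples with dilated monomials $X^{sv^s}$. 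Carried out as written, your pipeline produces that identity, not this one --- your own Jordan-quiver sanity check demonstrates exactly this, since it shows the indecomposable count yields $\prod_n(1-X^n)^{-1}$ instead of $(1-X)^{-1}$. The entire content of Corollary~\ref{simple_consev} is the passage from indecomposables to absolutely \emph{simples}, and you defer precisely this step to an unproven ``M\"obius-type cancellation inside the plethystic logarithm.'' Worse, the tool you nominate for it cannot work: on a conservative representation $M$ the maximal nilpotent subrepresentation is $0$ and the maximal epimorphic subrepresentation is all of $M$, so the filtrations $im^{+}$ and $im^{-}$ of Section~2 are trivial on $\mathrm{mod}_k^c(Q)$ and carry no information about how $M$ is built from simples.

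The route actually intended by the citation to \cite{MR 2009} is not a Krull--Schmidt argument at all but a Jordan--H\"older/Hall-algebra one: one inverts the element $\sum_{[M]}[M]$ in the completed Hall algebra of the category (equivalently, the series $\sum_v c(v,q)/\mathit{gl}(v,q)\,X^v$ in the \emph{twisted} quantum torus), and identifies the inverse, via counting filtrations with semisimple or simple subquotients, as the plethystic exponential of $\sum_v s(v,q)/(1-q)\,X^v$; the $\langle\cdot,\cdot\rangle$-twist in $\circ$ is what makes simples rather than indecomposables appear. Two further points you would need to address on that route: (i) $\mathrm{mod}_k^c(Q)$ is closed under extensions and direct summands but not under subobjects or quotients, so the existence and uniqueness of composition series by (absolutely) simple conservative objects inside the subcategory is not automatic and must be established; and (ii) your opening reduction of $A\circ\mathrm{Exp}(B)=1$ to $A=\mathrm{Exp}(-B)$ uses $\mathrm{Exp}(B)^{-1}=\mathrm{Exp}(-B)$, which is clear in a commutative $\lambda$-ring but requires justification in $\mathcal{T}_q(Q)$, where the $X^v$ do not commute and the corollary deliberately specifies the order of the factors.
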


Following the methods of Hua \cite{JH 2021}, we have the following result.
\begin{cor}\label{abs_indcomp}
Assuming that $c(v,q)$ is a polynomial function in $q$ with rational coefficients for all $v\in\mathbb{N}^n$, we have the following identity in the 
formal power series ring $\mathbb{Q}(q)[[X_1,\cdots,X_n]]$:
\begin{equation}
1 + \sum_{v^*} \!
\Bigg(\prod_{s\ge 1} \frac{q^{\langle v^s\!,\,v^s\rangle}} {q^{\langle \beta^s\!,\, \beta^s \rangle}}  \frac{c(v^s, q)}{\mathit{gl}(v^s, q)} X^{sv^s}\Bigg) =
 \,\mathrm{Exp}\Bigg(\sum_{v\in\mathbb{N}^n\backslash\{0\}}\! \frac{a(v,q)} {q-1}X^v\Bigg),
\end{equation}
where the sum runs over all tuples of dimension vectors  $v^*=(v^1,v^2,\cdots, v^r)$ such that $r\ge 1$, $v^i\in\mathbb{N}^n$ for $1\le i \le r$ and $v^r\ne 0$, 
and $\beta^s=\sum_{i\ge s} v^i$ for $1\le s \le r$.
\end{cor}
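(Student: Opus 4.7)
The plan is to adapt Hua's orbit-counting method from \cite{JH 2021} directly to the conservative subcategory $\mathrm{mod}_k^c(Q)$. Since conservative representations are closed under direct summands, extensions, and hence finite direct sums, Krull-Schmidt holds in $\mathrm{mod}_k^c(Q)$, so the notion of ``absolutely indecomposable conservative'' is well behaved: an absolutely indecomposable conservative $\mathbb{F}_q$-representation remains indecomposable and conservative after extension to $\bar{\mathbb{F}}_q$.

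First I would work over $\bar{\mathbb{F}}_q$ and analyze the Frobenius action on isomorphism classes. Every conservative $M$ over $\bar{\mathbb{F}}_q$ decomposes as $\bigoplus_j M_j^{\oplus n_j}$ with each $M_j$ absolutely indecomposable and conservative; the Frobenius $F$ permutes these classes, and a standard Galois-descent argument (as in Hua \cite{JH 2021}) identifies $\mathbb{F}_q$-forms of $M$ with $F$-stable isomorphism classes, which forces each indecomposable to occur together with its whole $F$-orbit at equal multiplicity. Grouping the summands of $M$ by the length $s$ of their $F$-orbit produces the tuple $v^*=(v^1,v^2,\dots)$, where $v^s\in\mathbb{N}^n$ records the total dimension vector coming from orbits of length $s$, and $\beta^s=\sum_{i\ge s}v^i$ is the dimension of the ``tail'' of $M$ obtained by discarding orbits of length less than $s$.

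Next I would translate this decomposition into the asserted generating-function identity. For each fixed $s$, counting conservative $M$'s built out of orbits of length exactly $s$ with total dimension $v^s$, and normalising by the automorphism group, produces a factor $c(v^s,q)/\mathit{gl}(v^s,q)$; the quantum-torus multiplication used to assemble pieces from different orbit lengths introduces Euler-form twists which I expect to telescope through the filtration by $\beta^s$ into the ratio $q^{\langle v^s,v^s\rangle}/q^{\langle \beta^s,\beta^s\rangle}$. The right-hand side is handled by the now-standard fact (see \cite{MR 2009,JH 2021}) that the plethystic exponential $\mathrm{Exp}$ is the formal-series realisation of the ``indecomposable to direct-sum'' rule, with the denominator $q-1$ absorbing the scalar automorphisms $\mathbb{F}_q^\times \subset \mathrm{Aut}(M_j)$ of any absolutely indecomposable $M_j$.

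The main obstacle I anticipate is the exact bookkeeping of the Euler-form prefactor: one must verify that the $q$-twists coming from the Ringel-Hall/quantum-torus multiplication, together with those accumulated by passing from the nested Frobenius-orbit filtration to an honest direct sum decomposition, combine precisely into $q^{\langle v^s,v^s\rangle}/q^{\langle \beta^s,\beta^s\rangle}$ with no residual cross-terms. The polynomiality hypothesis on $c(v,q)$ is then used at the very end to upgrade the identity, which a priori is valid at each prime power, to an equality of power series with coefficients in $\mathbb{Q}(q)$, so that all sums on both sides are simultaneously well-defined elements of $\mathbb{Q}(q)[[X_1,\dots,X_n]]$.
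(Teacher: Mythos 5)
The paper itself offers no argument here beyond the one-line citation of \cite{JH 2021}, so the question is whether your reconstruction of that method would actually yield the stated identity. Your framework for the \emph{right-hand} side is sound: closure of $\mathrm{mod}_k^c(Q)$ under direct summands gives Krull--Schmidt, conservativity is a rank condition on the maps $\sigma_i,\tau_i$ and hence insensitive to field extension, and the plethystic exponential with denominator $q-1$ is indeed the device that packages Galois descent and Frobenius orbits of absolutely indecomposables. The genuine gap is in your reading of the \emph{left-hand} side. You assert that the index $s$ in the tuple $v^*$ records Frobenius-orbit length and that the factor $c(v^s,q)/\mathit{gl}(v^s,q)\,X^{sv^s}$ counts conservative representations ``built out of orbits of length exactly $s$.'' That cannot be right: $c(v^s,q)/\mathit{gl}(v^s,q)$ is by definition the full stacky count $\sum_{[M]}1/|\mathrm{Aut}(M)|$ over \emph{all} conservative representations of dimension $v^s$ over the base field $\mathbb{F}_q$, whereas a genuine length-$s$ orbit contribution would have to involve counts over $\mathbb{F}_{q^s}$. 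A check on the Jordan quiver (where $\langle\cdot,\cdot\rangle=0$ and one may take $c=r$) makes the failure concrete: the $s$-th factor of the left-hand side is $\sum_m q^{m^2}X^{sm}/\mathit{gl}(m,q)=\prod_{j\ge0}(1-q^{-j}X^s)^{-1}$, while already for $s=1$, $m=2$ the stacky count of sums of Jordan blocks with eigenvalues in $\mathbb{F}_q$ is $\bigl(q^4-(q^2-q)^2/2\bigr)/\mathit{gl}(2,q)$, not $q^4/\mathit{gl}(2,q)$. Your gloss on $\beta^s$ is also internally inconsistent: if orbits of length $i$ contributed dimension $iv^i$, the tail of orbits of length $\ge s$ would have dimension $\sum_{i\ge s}iv^i$, not $\beta^s=\sum_{i\ge s}v^i$.

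In Hua's method the tuple $v^*$ is the transpose of a multipartition $\lambda=(\lambda^1,\dots,\lambda^n)$, with $v^s_i$ the multiplicity of the part $s$ in $\lambda^i$ and $\beta^s_i=(\lambda^i)'_s$; such multipartitions index unipotent types (equivalently, $\mathbb{F}_q[t]$-module structures) in a Burnside-type fixed-point count, and the prefactor $q^{\langle v^s,v^s\rangle-\langle\beta^s,\beta^s\rangle}/\mathit{gl}(v^s,q)$ assembles centralizer orders and fixed-point cardinalities. It does not come from a quantum-torus multiplication --- note that this corollary lives in the commutative ring $\mathbb{Q}(q)[[X_1,\dots,X_n]]$, and your appeal to ``Euler-form twists from the Ringel--Hall/quantum-torus multiplication'' suggests you are conflating it with Corollary \ref{simple_consev}. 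The substantive work your outline omits, and which is exactly what must be verified to transport \cite{JH 2021} to the conservative setting, is that the conservative locus of the fixed-point set of a unipotent element of type $\lambda$ has cardinality $\prod_s q^{e_s}c(v^s,q)$ for the correct exponents $e_s$. Your final use of the polynomiality hypothesis, to pass from infinitely many prime powers to an identity in $\mathbb{Q}(q)[[X_1,\dots,X_n]]$, is correct.
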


Consequently, $s(v,q)$ and $a(v,q)$ are polynomials in $q$ with rational coefficients. Elementary calculations suggest that their coefficients are always integers.
Using the classification of indecomposable representations of the Kronecker quiver, one can verify that the assumptions in Corollary \ref{simple_consev} and \ref{abs_indcomp} are ture
for the wild quiver which has one vertex and 2 loops.

\vspace{0.2cm}
\textbf{\large{Acknowledgments}}
\vspace{0.1cm}

I would like to thank Yingbo Zhang, Bangming Deng and Xueqing Chen for their valuable discussions and comments.

\vspace{0.2cm}

\textit{Email address}: \texttt{jiuzhao.hua@gmail.com}
\end{document}